\newtheorem{theorem}{Theorem}[section]
\newtheorem{lemma}[theorem]{Lemma}
\newtheorem{proposition}[theorem]{Proposition}
\newtheorem{corollary}[theorem]{Corollary}
\newtheorem*{namedtheorem}{\theoremname}
\newcommand{\theoremname}{testing}
\theoremstyle{definition}
\theoremstyle{remark}
\newtheorem{remark}[theorem]{Remark}
\def\bbA{\mathbb{A}}
\def\bbB{\mathbb{B}}
\def\F{\mathbb{F}}
\def\bbX{\mathbb{X}}
\def\Z{\mathbb{Z}}
\def\Q{\mathbb{Q}}
\def\cT{\mathcal{T}}
\def\bfh{\mathbf{h}}
\def\bfv{\mathbf{v}}
\def\Fb{\mathbf{F}}
\def\Ab{\mathbf{A}}
\def\Bb{\mathbf{B}}
\def\vb{\mathbf{v}}
\def\hb{\mathbf{h}}
\def\CFb{\mathbf{CF}}
\def\HFb{\mathbf{HF}}
\def\bfD{\mathbf{D}}
\def\mfs{\mathfrak{s}}
\def\mft{\mathfrak{t}}
\def\CFKi{CFK^{\infty}}
\def\gr{\textup{gr}}
\def\mix{\textup{mix}}
\def\red{\textup{red}}
\def\ref{\textup{ref}}
\def\spinc{\textup{Spin}^c}
\def\coker{\operatorname{coker}}
\def\rank{\operatorname{rank}}
\def\d{\partial}
\newcommand{\co}{\mskip0.5mu\colon\thinspace}
\author[Jennifer Hom]{Jennifer Hom}
\address {School of Mathematics, Georgia Institute of Technology, Atlanta, GA 30332}
\address{School of Mathematics, Institute for Advanced Study, Princeton, NJ 08540}
\email{hom@math.gatech.edu}
\author[Tye Lidman]{Tye Lidman}
\address {School of Mathematics, Institute for Advanced Study, Princeton, NJ 08540}
\email {tlid@math.utexas.edu}
\numberwithin{equation}{section}
\title{A note on positive definite, symplectic four-manifolds}
\begin{document}
\maketitle

\begin{abstract}
We prove that a positive definite smooth four-manifold with $b_2^+ \geq 2$ and having either no 1-handles or no 3-handles cannot admit a symplectic structure.
\end{abstract}

\section{Introduction}\label{sec:intro}

The geography problem for symplectic four-manifolds asks which signatures and Euler characteristics can be realized by closed, symplectic four-manifolds.  (For the current discussion, manifolds will be closed, simply-connected, and smooth.)  The importance of this problem is (more than) two-fold.  One reason is that many important constructions of exotic smooth four-manifolds make use of a symplectic structure on one of the manifolds.  A well-known example is to take the $K3$ surface, blown-up at one point.  This is a simply-connected, symplectic manifold with odd intersection form, and with $b_2^+ = 3$ and $b_2^- = 20$.  Freedman's theorem \cite{Freedman} implies that $K3 \# \overline{\mathbb{C}P}^2$ is homeomorphic to $\#_3 \mathbb{C}P^2 \#_{20} \overline{\mathbb{C}P^2}$.  By Taubes's theorem \cite{Taubes}, a four-manifold with $b_2^+\geq 2$ and a $\mathbb{C}P^2$ summand cannot be symplectic, as it has vanishing Seiberg-Witten invariants.  Therefore, we have produced an exotic $\#_3 \mathbb{C}P^2 \#_{20} \overline{\mathbb{C}P^2}$.  Understanding the possible topological types of symplectic four-manifolds help us to understand the topological types of exotica that we should attempt to construct.     

Another purpose of the geography problem is to understand the difference between symplectic and complex or K\"ahler manifolds.  One key example of this is the Bogomolov-Miyaoka-Yau inequality \cite{Bogomolov, Miyaoka, Yau1, Yau2}: every complex surface $S$ of general type satisfies $3 \sigma(S) \leq \chi(S)$.  It is an open question as to whether symplectic manifolds satisfy this as well.  In fact, other than $\mathbb{C}P^2$, there are no known simply-connected symplectic four-manifolds on the BMY line, although Stipsicz has constructed a family which is asymptotic to this line \cite{Stipsicz}.  

We focus on a potential family of counterexamples to the symplectic BMY inequality: the case where $X$ is positive-definite with $b_2^+ \geq 2$.  This implies by Donaldson's diagonalizability theorem \cite{Donaldson} and by \cite{Freedman} that $X$ is homeomorphic to $\#_n \mathbb{C}P^2$.  By Taubes's theorem, $\#_n \mathbb{C}P^2$ is not symplectic for any $n \geq 2$.  Therefore, a simply-connected, positive definite, symplectic four-manifold would be an exotic $\#_n \mathbb{C}P^2$, for which there are currently none known.  
 
The goal of this note is to prove that under mild conditions, simply-connected, positive definite, symplectic manifolds do not exist.  Recall that a manifold is called {\em geometrically simply-connected} if it admits a handlebody decomposition with no 1-handles.  Of course, this is equivalent to having a decomposition with no 3-handles instead and such a manifold is simply-connected.  

\begin{theorem}\label{thm:main}
Let $X$ be a closed, geometrically simply-connected four-manifold with $b_2^+(X) \geq 2$.  If the intersection form of $X$ is positive definite, then $X$ is not symplectic.
\end{theorem}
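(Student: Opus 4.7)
The plan is to suppose for contradiction that $X$ admits a symplectic structure and to combine the resulting Seiberg--Witten non-vanishing with the topological rigidity forced by the positive definite intersection form. By Donaldson's diagonalization theorem together with Freedman's classification, $X$ is homeomorphic to $\#_n\mathbb{C}P^2$ with $n = b_2^+(X) \geq 2$, so $\chi(X) = n+2$, $\sigma(X) = n$, and the intersection form on $H^2(X;\Z)$ is $\langle 1\rangle^n$. By Taubes's theorem, the canonical class $K = -c_1(X,\omega)$ is a Seiberg--Witten basic class, hence an integral characteristic element with $K\cdot K = 2\chi(X) + 3\sigma(X) = 5n+4$.

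Next I would use geometric simply-connectedness to decompose $X = W \cup_{S^3} B^4$, where $W$ is a 2-handlebody with intersection form $\langle 1\rangle^n$. The inclusion induces an isomorphism $H^2(X) \cong H^2(W)$, identifying $K$ with a characteristic element in this positive definite unimodular lattice. Choosing a diagonalizing basis and writing $K = (a_1,\dots,a_n)$ with each $a_i$ odd, the identity $\sum a_i^2 = 5n+4$ together with the congruence $a_i^2 \equiv 1 \pmod 8$ for odd $a_i$ yields $5n+4 \equiv n \pmod 8$, forcing $n$ odd. This disposes of the even case at once, but characteristic elements of the required square do exist when $n$ is odd (for instance $(\pm 1, \pm 3, \pm 3)$ when $n=3$), so a finer argument is needed.

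For $n$ odd, the plan is to bring in Ozsv\'ath--Szab\'o's Heegaard Floer machinery. Since $n \geq 2$, partition the 2-handles of $W$ into two non-empty groups to produce a separating cut $X = W_1 \cup_Y W_2$: each $W_i$ is positive definite with $b_2^+(W_i) \geq 1$, and $Y$ is a rational homology sphere because the relevant principal submatrix of the positive definite form is non-singular. Taubes's theorem provides that the mixed Ozsv\'ath--Szab\'o invariant $\Phi_{X, \mathfrak{s}_K}$ is nonzero, while the goal is to show it must also vanish: Ozsv\'ath--Szab\'o's correction-term inequality $c_1(\mathfrak{s})^2 + b_2(-W_i) \leq 4d(-Y, \mathfrak{s}|_{-Y})$, applied to the negative definite pieces $-W_i$, sharply restricts which $\spinc$ extensions of $K|_Y$ can contribute via the cobordism maps on $HF^\pm$. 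The hard step will be to push this restriction far enough, over all choices of cut, to force every summand in $\Phi_{X, \mathfrak{s}_K}$ to vanish simultaneously; positive definiteness on both sides of $Y$, together with the sharp value $K^2 = 5n+4$, are the key inputs that should bring the correction-term inequalities into conflict with Taubes's non-vanishing.
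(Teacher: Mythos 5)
Your outline gets the scaffolding right (Donaldson/Freedman, an admissible cut between definite pieces, a non-vanishing theorem for symplectic manifolds) but it has two gaps, one small and one fatal. The small one: writing $X = W \cup_{S^3} B^4$ with $W$ a $2$-handlebody whose form is $\langle 1\rangle^n$ assumes a handle decomposition with neither $1$- nor $3$-handles, i.e.\ a perfect Morse function, which is strictly stronger than geometric simple-connectedness (that hypothesis only gives no $1$-handles, after possibly turning $X$ upside down). This can be repaired, but it takes an argument: one must show that in the presence of $k$ $3$-handles the linking matrix, which has nullity $k$ because it presents $H_1(\#_k S^2\times S^1)$, can be handleslid to the block form $I_{n-k}\oplus 0_k$ using Donaldson's theorem; that is exactly Lemma~\ref{lem:handle-splitting}. (Your even-$n$ characteristic-vector computation is correct but peripheral: it is the classical obstruction to an almost complex structure and does not help with the odd case.)

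The fatal gap is the step you defer as ``the hard step'': nothing in your proposal actually forces the mixed invariant to vanish, and the mechanism you suggest cannot do it. Take the cut the paper uses: $W_1 = W_{1}(K)$, the trace of $+1$-surgery on a knot $K$ coming from one diagonal $2$-handle, and $Y = S^3_{1}(K)$. Positive definiteness on both sides of $Y$ does give $d(S^3_1(K)) = 0$ (this is the only place correction terms enter). But the Ozsv\'ath--Szab\'o inequality for the negative definite piece $-W_1$ then reads $-c_1(\mft)^2 + 1 \leq 4\,d(-Y) = 0$, i.e.\ $c_1(\mft)^2 \geq 1$, which is automatically satisfied by every $\spinc$ structure on the trace (odd squares in the $\langle 1 \rangle$ lattice); it excludes nothing, and the same happens on the other side. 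More generally, $d$-invariant and grading information alone does not kill the $HF^\pm$ maps of a positive $2$-handle attachment --- the paper explicitly remarks that Seiberg--Witten/Donaldson-level information was not enough and that the knot filtration is needed. The actual engine of the proof is Proposition~\ref{prop:vanishing}: if $d(S^3_1(K)) = 0$, then $V_s = 0$ for $s \geq 0$ and $H_s = 0$ for $s \leq 0$, and a surjectivity (zig-zag) argument in the truncated mapping cone for $\HFb^-(S^3_n(K))$ shows that $F^-_{W_n(K),\mft} = 0$ for every $n>0$ and every $\mft$. Since the mixed map factors through this $2$-handle map, $\Phi_{X,\mft} = 0$ for all $\mft$, contradicting the Ozsv\'ath--Szab\'o non-vanishing theorem for symplectic manifolds (which, not Taubes's SW theorem, is the correct input here since the invariant being used is the Heegaard Floer mixed invariant). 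Without this surgery-formula computation, or an equivalent vanishing theorem for the cobordism map, your outline cannot reach a contradiction.
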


In other words, for geometrically simply-connected, symplectic manifolds with $b_2^+(X) \geq 2$, there exists an extremely weak version of a symplectic BMY inequality: $\sigma(X) + 4 \leq \chi(X)$.  

It is natural to ask to what extent the condition on the handlebody decomposition of $X$ is necessary in Theorem~\ref{thm:main}.  It is an open problem as to whether or not every closed, simply-connected four-manifold is geometrically simply-connected \cite[Problem 4.18]{Kirby}.  In fact, it is  unknown if every closed, simply-connected four-manifold admits a {\em perfect Morse function}: a Morse function with the number of critical points equal to the sum of the Betti numbers of $X$; a perfect Morse function necessarily gives a handlebody decomposition with no 1- and no 3-handles.  By the above theorem, a simply-connected, positive definite, symplectic four-manifold with $b_2^+ \geq 2$ would provide a counterexample to these questions.  This idea for constructing four-manifolds without perfect Morse functions is essentially due to Rasmussen \cite{Rasmussen}.  

While $\mathbb{C}P^2$ has $b_2^+ = 1$, it turns out we can use Theorem~\ref{thm:main} to give an interesting characterization of this manifold.  The following was pointed out to us by Tian-Jun Li.  
\begin{corollary}
If a simply-connected, closed four-manifold $X$ has positive-definite intersection form, a perfect Morse function, and a symplectic structure, then it is diffeomorphic to $\mathbb{C}P^2$.  
\end{corollary}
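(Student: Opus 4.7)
The plan is to combine Theorem~\ref{thm:main} with Freedman's theorem and classification results for symplectic $4$-manifolds of small $b_2^+$.

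I would first observe that a perfect Morse function on $X$ yields a handlebody decomposition without $1$-handles, so $X$ is geometrically simply-connected and Theorem~\ref{thm:main} applies. Its contrapositive forces $b_2^+(X) \leq 1$. Conversely, any closed symplectic $4$-manifold satisfies $b_2^+ \geq 1$, because $[\omega]$ is a nonzero cohomology class with $[\omega]^2 = \int_X \omega\wedge\omega > 0$. Hence $b_2^+(X) = 1$, and positive-definiteness gives $b_2^-(X) = 0$, so $b_2(X) = 1$. Combined with $\pi_1(X) = 1$, Freedman's theorem then identifies $X$ with $\mathbb{C}P^2$ topologically.

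Next I would upgrade this homeomorphism to a diffeomorphism. The manifold $X$ is automatically minimal, since an embedded $(-1)$-sphere would give a homology class of square $-1$, contradicting positive-definiteness. Thus $X$ is a minimal, simply-connected symplectic $4$-manifold with $b_2^+ = 1$ and $b_2^- = 0$. I would then appeal to the classification of minimal symplectic $4$-manifolds with $b_2^+ = 1$, a body of results due (in various formulations) to Gromov, Taubes, McDuff, Li, and Liu. A consequence of that classification is that any symplectic $4$-manifold homeomorphic to $\mathbb{C}P^2$ is in fact diffeomorphic to $\mathbb{C}P^2$, which concludes the argument.

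The first half of the proof is a bookkeeping exercise given Theorem~\ref{thm:main}. The main obstacle, and the step requiring genuinely external input, is the last one: it leans on the gauge-theoretic and pseudoholomorphic-curve machinery behind the classification of symplectic $4$-manifolds with $b_2^+ = 1$.
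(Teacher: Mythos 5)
Your first half is fine and matches the paper: a perfect Morse function gives a decomposition with no 1-handles, Theorem~\ref{thm:main} forces $b_2^+(X)\leq 1$, and $[\omega]^2>0$ forces $b_2^+(X)=1$, hence $b_2(X)=1$. The gap is in the second half. The statement you invoke --- that any symplectic four-manifold homeomorphic to $\mathbb{C}P^2$ is diffeomorphic to $\mathbb{C}P^2$ --- is not a known consequence of the Gromov/McDuff/Taubes/Li--Liu classification; as far as is known it is an open problem (essentially the question of a simply-connected symplectic fake projective plane, i.e.\ an exotic symplectic $\mathbb{C}P^2$). What those results give is: $X$ is rational or ruled provided one can exhibit either $K\cdot\omega<0$ or a symplectically (or smoothly, with Seiberg--Witten input) embedded sphere of nonnegative self-intersection. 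In your situation $H^2(X;\Z)\cong\Z\langle H\rangle$ and $K=\pm 3H$; the case $K=-3H$ is handled by Liu's theorem, but nothing in that machinery excludes $K=+3H$, i.e.\ a minimal symplectic manifold of general type with $b_2^+=1$, $b_2^-=0$. Indeed the numerical and Seiberg--Witten data of that putative manifold coincide (mod torsion) with those of the fake projective planes, which do exist; the only difference is $\pi_1$, which the gauge-theoretic and pseudoholomorphic arguments you cite do not see. So the "genuinely external input" you lean on is not available, and minimality plus $b_2^+=1$ does not close the argument.

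The paper avoids this entirely by using the perfect Morse function a second time, after Theorem~\ref{thm:main} has forced $b_2^+(X)=1$. Since $b_2(X)=1$ and there are no 1- or 3-handles, $X$ has a handle decomposition with one 0-handle, a single 2-handle attached along a knot $K$ with framing $+1$ (the intersection form being $\langle 1\rangle$), and one 4-handle. The 4-handle must be attached along $S^3$, so $S^3_1(K)\cong S^3$, and the Gordon--Luecke theorem then forces $K$ to be the unknot. Hence the decomposition is the standard handle structure on $\mathbb{C}P^2$, giving the diffeomorphism directly --- no symplectic classification (and not even Freedman) is needed for this step; the symplectic hypothesis enters only through Theorem~\ref{thm:main}. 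If you want to salvage your route, you would have to supply an argument ruling out the $K=+3H$ case, which is precisely the hard open question your plan presupposes.
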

\begin{proof}
Theorem~\ref{thm:main} implies that $b_2^+(X) = 1$.  By assumption, $X$ has a handle decomposition with one 0-handle, one $+1$-framed 2-handle attached along a knot $K$, and one 4-handle.  Because there are no 1- or 3-handles, we see that $S^3_1(K) = S^3$.  By the knot complement theorem \cite{GordonLuecke}, $K$ is the unknot.  Therefore, we have given $X$ the standard handlebody structure for $\mathbb{C}P^2$.      
\end{proof}

Theorem~\ref{thm:main} will be proved by showing that if $X$ has no 1-handles, $b_2^+ \geq 2$, and positive-definite intersection form, then the closed four-manifold invariants $\Phi_{X,\mfs}$, defined by Ozsv\'ath-Szab\'o using Heegaard Floer homology, must vanish for all $\mfs \in \spinc(X)$.  The result will then follow from a theorem of Ozsv\'ath-Szab\'o which states that if $X$ is a symplectic manifold with $b_2^+ \geq 2$, then $\Phi_{X,\mfs} \neq 0$ for a particular $\mfs$ \cite{OSsymplectic}.  (This is the analogue of Taubes's theorem on the non-vanishing of the Seiberg-Witten invariants of symplectic manifolds.)  The stated vanishing result will follow from a technical computation for a particular cobordism map in Heegaard Floer homology.  

\begin{remark}
It is interesting to point out that we were unable to prove Theorem~\ref{thm:main} using the Seiberg-Witten or Donaldson invariants.  Our argument makes use of the knot filtration on the Heegaard Floer chain complex and its relationship with Dehn surgery, which currently do not exist in the gauge-theoretic invariants.  Alternatively, there is likely a proof of Theorem~\ref{thm:main} using the surgery exact triangle from \cite[Theorem 3.1]{OSinteger}, but as far as the authors know, no such analogue of this particular exact triangle has been proved in monopole or instanton Floer homology.  
\end{remark}

The technical computation needed above will also allow us to obstruct certain four-manifolds from admitting symplectic structures with convex boundary.  Recall that the {\em trace} of an integral surgery $S^3_n(K)$ refers to the four-manifold obtained by attaching an $n$-framed 2-handle to the four-ball along $K$.  We write $W_n(K)$ to denote the trace.  

\begin{theorem}\label{thm:stein}
Consider a knot $K$ in $S^3$ such that $d(S^3_1(K)) = 0$.  Then for $n>0$ and any contact structure on $S^3_n(K)$, the trace $W_n(K)$ cannot be a symplectic filling of $S^3_n(K)$.
\end{theorem}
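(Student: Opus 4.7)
The plan is to argue by contradiction. Suppose $W_n(K)$ is a symplectic filling of $(S^3_n(K),\xi)$ for some contact structure $\xi$ and some $n>0$; the goal is to extract from $\xi$ an element of Heegaard Floer homology on which the cobordism $W_n(K)$ acts nontrivially, and then to use the technical computation alluded to in the introduction to show that no such action is possible when $d(S^3_1(K))=0$.

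For the first half I would invoke Ozsv\'ath--Szab\'o's theory of the contact invariant \cite{OSsymplectic}: if $W$ is a strong symplectic filling of $(Y,\xi)$ with canonical $\spinc$-structure $\mfs_\omega$, then, viewing $W$ (with a $4$-ball removed from its interior) as a cobordism from $S^3$ to $Y$ and reversing orientations, the induced map
\[
\widehat{F}_{-W,\mfs_\omega}\co \HFhat(-Y)\longrightarrow \HFhat(S^3)\cong \F
\]
sends the contact class $c(\xi)$ to a generator and is in particular nonzero. Applied to $W=W_n(K)$, this shows that there is some $\spinc$-structure $\mfs$ on $W_n(K)$ for which $\widehat{F}_{-W_n(K),\mfs}$ is nonzero.

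The heart of the argument is then to show that the hypotheses $n>0$ and $d(S^3_1(K))=0$ force $\widehat{F}_{W_n(K),\mfs}$ (equivalently, its dual on $-W_n(K)$) to vanish for every $\spinc$-structure $\mfs$. Since $W_n(K)$ is a single positive $2$-handle cobordism, I plan to use the Ozsv\'ath--Szab\'o integer surgery exact triangle to identify $\widehat{F}_{W_n(K),\mfs}\co\HFhat(S^3)\to \HFhat(S^3_n(K))$ with a map built explicitly from $\CFKi(K)$, and then apply the equivalence $d(S^3_1(K))=0 \Leftrightarrow V_0(K)=0$ together with the monotonicity $V_k(K)\le V_0(K)$ to conclude $V_k(K)=0$ for all $k\ge 0$, forcing the relevant pieces of the cobordism map to be zero. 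The main obstacle I anticipate is exactly this last step: promoting the $d$-invariant condition at a single surgery slope into a vanishing statement for the cobordism map in every $\spinc$-structure and at every $n>0$. This is precisely the technical computation described in the introduction as the engine of Theorem~\ref{thm:main}, now applied to a cobordism rather than a closed manifold; once it is in hand, it contradicts the nonvanishing from the contact invariant and completes the proof.
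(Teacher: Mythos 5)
Your overall contradiction scheme (a nonvanishing statement coming from the symplectic filling versus a vanishing statement for the $2$-handle cobordism map forced by $d(S^3_1(K))=0$) matches the paper's, but there are two genuine gaps. First, the nonvanishing input you invoke is not justified at the level of generality you need: the statement that a filling $W$ induces a map $\widehat{F}_{-W,\mfs_\omega}\co\HFhat(-Y)\to\HFhat(S^3)$ carrying $c(\xi)$ to a generator is a theorem for \emph{Stein} fillings (Plamenevskaya \cite{Plamenevskaya}-type functoriality), not for arbitrary symplectic fillings, and \cite{OSsymplectic} --- which you cite for it --- is the closed-manifold nonvanishing theorem and contains no such contact-class statement. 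The hypothesis of Theorem~\ref{thm:stein} is only that $W_n(K)$ is a (weak) filling, so you would first need Ohta--Ono \cite{OhtaOno} to upgrade weak to strong on a rational homology sphere, and even then the strong-but-not-Stein case of your key claim has no off-the-shelf reference; the standard way around this is exactly what the paper does: embed the filling in a closed symplectic $X$ with $b_2^+(X)\geq 2$ via \cite{EtnyreHonda}, note that $S^3_n(K)$ gives an admissible cut with $b_2^+(W_n(K))=1$ (here $n>0$ is used), so the mixed invariant factors through $F^-_{W_n(K)-B^4,\mft}$, and then quote Theorem~\ref{thm:symplectic-nonvanishing}. With that route no contact-invariant functoriality is needed at all.

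Second, the vanishing half --- which you correctly identify as the engine --- is left as an ``anticipated obstacle,'' but it is the entire technical content of the paper (Proposition~\ref{prop:vanishing}), and your sketch of it is not quite right. Knowing $V_s=0$ for $s\geq 0$ does not make ``the relevant pieces of the cobordism map zero''; in the surgery formula the $2$-handle map in the $s$-th $\spinc$ structure is the inclusion of $\Bb_s$ into the mapping cone, and one must show its image dies in homology. That requires the symmetric input $H_s=V_{-s}=0$ for $s\leq 0$ (you only mention $V_k$ for $k\geq 0$), passage to the truncated cone, and an inductive/telescoping construction of a chain $\sum a_i$ with $\bfD_{n,*}(\sum a_i)=b_s$ using surjectivity of $\vb_{i,*}$ for $i\geq 0$ and of $\hb_{i,*}$ for $i\leq 0$. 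Moreover the identification of the cobordism map inside the surgery formula is established in the completed minus theory (Theorem~\ref{thm:mapping-cone}, from \cite{MOlink}); running your argument in $\HFhat$ instead would require a hat-level version of that identification and of the surjectivity statements (or a duality argument to pass from the minus-theory vanishing to the map out of $\HFhat(-S^3_n(K))$), none of which is supplied. So as written the proposal does not yet constitute a proof, though both halves can be repaired by following the paper's embedding argument and Proposition~\ref{prop:vanishing}.
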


In the above theorem, $d(S^3_1(K))$ denotes the {\em correction term} from Heegaard Floer homology \cite{OSgraded}.  We do not need to distinguish between weak and strong fillability in the theorem, since these are equivalent for rational homology spheres by \cite{OhtaOno}.

Examples of knots with $d(S^3_1(K)) = 0$ are quite numerous.  For example, this includes any smoothly slice knot \cite{Peters} or any negative torus knot \cite[Theorem 6.1]{OwensStrle}.

\begin{remark}\label{rmk:slice}
Theorem~\ref{thm:stein} can be seen to hold if $K$ is smoothly slice without the techniques developed in this paper.  For expository reasons, we will show this after the proof of Theorem~\ref{thm:stein} in Section~\ref{sec:proofs}.    
\end{remark}

\begin{remark}
It follows from work of the first author and Wu \cite{HomWu} that if $d(S^3_1(K)) = 0$, then $\tau(K) \leq 0$.  Therefore, by work of Plamenevskaya \cite{Plamenevskaya}, we have that the maximum Thurston-Bennequin number for any Legendrian representative of $K$ is at most $-1$, and consequently this proves that there is not an ``obvious'' Stein structure on the trace of $+n$-surgery on $K$.  Theorem~\ref{thm:stein} thus rules out, for example, the existence of another knot $K'$ with a Legendrian representative having $tb \geq n+1$ and for which the trace of $+n$-surgery is diffeomorphic to the trace of $+n$-surgery on $K$.  
\end{remark}

\begin{remark}
Note that there exist knots $K$, for example the unknot, such that $d(S^3_{1}(K)) = 0$ and $S^3_{n}(K)$ admits a fillable contact structure for $n > 0$; the knot $8_{20}$ also has $d(S^3_{1}(K)) = 0$ and $S^3_{n}(K)$  admits a tight contact structure with non-vanishing Heegaard Floer contact invariant for all $n > 0$ \cite[Example 1.5]{Golla}. We are grateful to Steven Sivek for pointing out the latter example.  
\end{remark}

\begin{remark}
Similar techniques to those used here were used by Mark and Tosun in the hat theory to study the behavior of the Heegaard Floer contact invariant under rational surgery \cite{MarkTosun}.  Analogously, their results provide obstructions to fillability for certain contact structures on manifolds obtained by rational surgery (see \cite[Corollary 1.4]{MarkTosun}). \\  
\end{remark}

\noindent {\bf Outline:} In Section~\ref{sec:background}, we review the relevant material from Heegaard Floer homology, including the definition of the closed four-manifold invariants.  This is where we establish the main technical result, which is the vanishing of a particular cobordism map in Heegaard Floer homology.  In Section~\ref{sec:proofs}, we prove Theorems~\ref{thm:main} and \ref{thm:stein}. \\ 

\noindent {\bf Acknowledgments:} We would like to thank Tian-Jun Li, Ciprian Manolescu, Tim Perutz, Steven Sivek, and Ian Zemke for helpful discussions.  The first author was partially supported by NSF grants DMS-1128155, DMS-1307879, and a Sloan Research Fellowship.  The second author was partially supported by NSF grants DMS-1128155 and DMS-1148490.

\section{Four-manifold invariants and Heegaard Floer homology}\label{sec:background}
\subsection{The mixed invariants}
We begin by reviewing the Ozsv\'ath-Szab\'o four-manifold invariants, defined in \cite{OSsmoothfour}, coming from Heegaard Floer homology. We will work over $\F = \Z/2\Z$, since the proofs of invariance currently only hold for $\Z/2\Z$-coefficients \cite{Zemke} (see also \cite{Juhasz}).  However, modulo this issue, the arguments in this paper can easily be extended to hold for any coefficients.  

We focus on the definition of the mixed invariants of closed four-manifolds from \cite[Section 9]{OSsmoothfour}; see also \cite[Section 11.3]{MOlink}.  We assume the reader is familiar with Heegaard Floer homology for three-manifolds as in \cite{OSinvariance} and the knot Floer complex as in \cite{OSknots}.  Given a Spin$^c$ cobordism $(W,\mft)$ from $(Y_1,\mfs_1)$ to $(Y_2, \mfs_2)$, Ozsv\'ath and Szab\'o define a map 
\[
F^\circ_{W,\mft} \co HF^\circ(Y_1,\mfs_1) \to HF^\circ(Y_2,\mfs_2), 
\] 
where we use $\circ$ to denote either $\hspace{0.1cm} \widehat{\hspace{.07cm}} \hspace{0.1cm}, +, -,$ or $\infty$.  It is important to note that this map is grading homogeneous, but not necessarily grading preserving.  When $\mfs_1, \mfs_2$ are torsion, the change in degree is computed by 
\[
\gr(F^\circ_{W,\mft}) = \frac{ c_1(\mft)^2 - 2\chi(W) - 3\sigma(W) }{4}.  
\]

Let $X$ be a closed, oriented four-manifold with $b_2^+(X) \geq 2$. We delete two four-balls to obtain a cobordism $W$ from $S^3$ to $S^3$. Cut $W$ along a three-manifold $N$ such that $W$ is divided into cobordisms $W_1$, $W_2$ with $b_2^+(W_1), b_2^+(W_2) \geq 1$ and $\delta H^1(N; \Z)=0 \subset H^2(W, \d W; \Z)$, where $\delta \co H^1(N;\Z) \to H^2(W, \d W;\Z)$ is the coboundary map in the relative Mayer-Vietoris sequence. We call such an $N$ an \emph{admissible cut}, and one always exists.  Note that the latter condition guarantees that there is at most one Spin$^c$ structure on $W$ which restricts to given Spin$^c$ structures on $W_1$ and $W_2$ respectively.  

Let $\mft$ be a Spin$^c$ structure on $X$, and by abuse of notation, let $\mft$ also denote the induced Spin$^c$ structure on $W$. The cobordisms $W_1, W_2$ induce the following maps:
\begin{align*}
	F^-_{W_1, \mft |_{W_1}} &\co HF^-(S^3) \rightarrow HF^-(N, \mft |_N) \\
	F^+_{W_2, \mft |_{W_2}} &\co HF^+(N, \mft |_N) \rightarrow HF^+(S^3),
\end{align*}
each of which factors through $HF_\red (N, \mft |_N)$. We compose these two maps to obtain the mixed map
	\[ F^\mix_{W, \mft} \co HF^-(S^3) \rightarrow HF^+(S^3). \]
We define the closed four-manifold invariant $\Phi_{X,\mft} \in \F$ to be non-zero if and only if the mixed map $F^\mix_{W,\mft}$ is non-zero.  

We remark that the mixed invariants were originally defined with extra structure, taking into account $H_1(X;\Z)$.  Since the manifolds we consider all have trivial first homology, we will not discuss this more general structure.  

The key fact about the mixed invariants that we will need is the non-vanishing result of Ozsv\'ath-Szab\'o for symplectic manifolds, which is the analogue of Taubes's non-vanishing result for the Seiberg-Witten invariants.  While their results are stronger, we will only need the most basic part of the result.  

\begin{theorem}[Ozsv\'ath-Szab\'o, \cite{OSsymplectic}]\label{thm:symplectic-nonvanishing}
Let $X$ be a closed, connected, oriented, smooth four-manifold with $b_2^+ \geq 2$.  If $X$ is symplectic, then $\Phi_{X,\mft} \neq 0$ for $\mft$ the canonical Spin$^c$ structure. 
\end{theorem}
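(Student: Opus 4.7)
The plan is to imitate Taubes's non-vanishing argument for Seiberg--Witten invariants in the Heegaard Floer setting, reducing to a Lefschetz fibration and then performing a direct computation of the mixed map on an adapted Heegaard diagram. There is no obvious purely formal deduction from the definition of $\Phi_{X,\mft}$, so the proof must extract geometric input from the symplectic structure.

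First I would invoke Donaldson's theorem that, after blowing up sufficiently many base points of a Lefschetz pencil, any closed symplectic four-manifold acquires a Lefschetz fibration $\pi\co X' \to S^2$, where $X' = X \# N\,\overline{\mathbb{C}P}^2$. So as a preliminary step I would prove a blow-up formula relating $\Phi_{X,\mft}$ to $\Phi_{X',\mft \# \mfs_e}$, where $\mfs_e$ is the Spin$^c$ structure on each exceptional summand whose first Chern class pairs with the exceptional sphere as $\pm 1$. The canonical class transforms by adding $E_i$ for each exceptional divisor, so non-vanishing for $X'$ in the blown-up canonical Spin$^c$ structure is equivalent to what we want for $X$. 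This reduces everything to Lefschetz fibrations.

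Next I would construct a Heegaard decomposition $X' = W_1 \cup_N W_2$ in which $N$ is obtained by cutting along a circle bundle over the fiber surface $\Sigma_g$; equivalently, I take $W_1$ to be a regular neighborhood of a regular fiber together with a section, and $W_2$ to be the complement, which is built from the vanishing cycles via the monodromy. Such a cut is admissible in the sense of the paper. The point is that on this decomposition the Heegaard diagram for $N$ is essentially the standard diagram for the unit circle bundle over $\Sigma_g$, whose Floer homology is well understood. Within this diagram, the canonical Spin$^c$ structure restricts to a specific torsion Spin$^c$ structure on $N$ and is represented by a distinguished generator associated to the ``top'' intersection point of the symmetric product.

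The heart of the argument, and the step I expect to be hardest, is showing that the mixed map $F^{\mix}_{W,\mft_{can}}$ is non-zero. Using the grading formula
\[
\gr(F^\circ_{W,\mft}) = \frac{c_1(\mft)^2 - 2\chi(W) - 3\sigma(W)}{4},
\]
together with the adjunction-type bound $\langle c_1(\mft_{can}), [F]\rangle = 2g-2$ on the fiber, I would argue that among Spin$^c$ structures extending the canonical one on $N$, only a single extremal summand can possibly support the mixed map, and that its image sits at the very top of $HF_{\red}(N)$. The remaining task is to show this single contribution is non-zero, which one does by matching generators on either side of the cut: on the $W_1$ side the distinguished generator is forced to hit the top of $HF^+(N)$ because the fiber-neighborhood admits a positive symplectic cap, and on the $W_2$ side a handle-by-handle analysis (building up the monodromy) shows no cancellation occurs in this extremal degree. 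The blow-up formula from the first step then transports this non-vanishing back to $X$ and yields the theorem.
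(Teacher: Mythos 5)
You should note at the outset that the paper does not prove this statement at all: it is imported as a black box, Ozsv\'ath--Szab\'o's non-vanishing theorem \cite{OSsymplectic}, the Heegaard Floer analogue of Taubes's theorem, so there is no internal proof to compare against; the question is whether your sketch would constitute an independent proof, and as written it would not. You have correctly reconstructed the broad strategy of the original argument -- Donaldson's Lefschetz pencil theorem, a blow-up formula for the mixed invariants, cutting $X'$ along a circle bundle over a regular fiber, and isolating an extremal Spin$^c$ structure -- but the decisive step is precisely the one you compress into the sentence that ``a handle-by-handle analysis (building up the monodromy) shows no cancellation occurs in this extremal degree.'' That non-vanishing of the contribution coming from the Lefschetz fibration piece is the entire content of the theorem. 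In Ozsv\'ath--Szab\'o's proof it requires, among other things: the computation of $HF^+$ of circle bundles over surfaces in the extremal torsion Spin$^c$ structures (where the group is a single $\F$), adjunction-type vanishing results to rule out contributions from all other Spin$^c$ structures, and an induction over the vanishing cycles in which the holomorphic-triangle maps associated to the 2-handle attachments are shown to act as isomorphisms on the extremal piece. None of this follows from the degree-shift formula you quote, and ``the fiber-neighborhood admits a positive symplectic cap'' is not an argument -- non-vanishing statements of exactly this kind are what is at stake, and asserting them is circular.

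There are secondary gaps as well. The blow-up formula relating $\Phi_{X,\mft}$ to $\Phi_{X',\mft'}$ is itself a theorem that must be proved (a computation of the mixed map for the $\overline{\mathbb{C}P}^2$ cobordism in the relevant Spin$^c$ structures), not a formal consequence of the definitions. You need the fiber genus to be at least $2$ -- arranged by taking a pencil of sufficiently high degree -- since otherwise both the circle-bundle computation and your ``extremal degree'' argument degenerate. And the cut you propose must be checked to be admissible in the sense of Section~\ref{sec:background}: $b_2^+ = 1$ for the neighborhood of fiber plus section is fine, but $b_2^+ \geq 1$ for the complement uses $b_2^+(X) \geq 2$ and the $\delta H^1$ condition needs verification for a circle bundle over $\Sigma_g$. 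For the purposes of this paper, citing \cite{OSsymplectic} is the correct move; a genuine proof would have to contain the Lefschetz-fibration induction, not merely its statement.
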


\noindent In fact, we will only need that $\Phi_{X,\mft}$ is non-zero for some $\mft$.  

\subsection{Completions}\label{sec:completions}
In order to prove Theorems~\ref{thm:main} and \ref{thm:stein}, we will need to understand, in certain situations, the Heegaard Floer homology cobordism map associated to a 2-handle attachment.  We will do this computation using the mapping cone formula \cite{OSinteger}, a tool for computing the Heegaard Floer homology of surgery on a knot, which we will recall in the next subsection.  This approach describes the cobordism map in terms of the knot Floer complex, $CFK^\infty$.  We will be particularly interested in computing these cobordism maps for $HF^-$ (as opposed to $HF^+$).  In order to apply the mapping cone formula for the minus theory, we must work with completed coefficients, i.e., we work with modules over the ring $\F[[U]]$  as in \cite{MOlink} (see also \cite{KM}), instead of $\F[U]$.  We do not discuss the case of $HF^+$, since completions do not change $HF^+$ as every element is in the kernel of $U^n$ for some $n$.  

For a Spin$^c$ three-manifold $(Y,\mfs)$, we define $\CFb^-(Y,\mfs)$ analogously to $CF^-(Y,\mfs)$, except we use the base ring $\F[[U]]$ instead of $\F[U]$.  In other words $\CFb^-(Y,\mfs) = CF^-(Y,\mfs) \otimes_{\F[U]} \F[[U]]$.  From this, we obtain $\HFb^-(Y,\mfs)$.  From now on, we will only discuss the case that $\mfs$ is torsion.  (See \cite[Section 2]{MOlink} for the case of non-torsion Spin$^c$ structures.)  It is an annoying observation that $\CFb^-(Y,\mfs)$ is not a relatively $\mathbb{Z}$-graded chain complex in the usual sense, since if the degree of $U$ is $-2$, then $\CFb^-(Y,\mfs)$ does not split as a {\em direct sum} of its graded pieces.  We do have that $\CFb^-(Y,\mfs)$ is a relatively $\Z/2\Z$-graded chain complex.  Since $\F[[U]]$ is flat over $\F[U]$, we have that $\HFb^-(Y,\mfs)$ is isomorphic to $HF^-(Y,\mfs) \otimes_{\F[U]} \F[[U]]$; since $HF^-(Y,\mfs)$ is (non-canonically) isomorphic to a direct sum of modules of the form $\F[U]$ or $\F[U]/U^k$, we can recover $HF^-(Y,\mfs)$ from $\HFb^-(Y,\mfs)$, say as a relatively $\Z/2\Z$-graded $\F[U]$-module.  

The cobordism maps $F^-_{W,\mft}\co HF^-(Y_1,\mfs_1) \to HF^-(Y_2, \mfs_2)$ also have analogues in the completed setting as well, denoted $\Fb^-_{W,\mft}\co\HFb^-(Y_1,\mfs_1) \to \HFb^-(Y_2,\mfs_2)$.  More generally, a $\Z$-grading homogeneous chain map $f\co CF^-(Y_1,\mfs_1) \to CF^-(Y_2,\mfs_2)$ respecting the $\F[U]$-module structure induces a $\Z/2\Z$-grading homogenous $\F[U]$-module map $\mathbf{f}\co\CFb^-(Y_1,\mfs_1) \to \CFb^-(Y_2,\mfs_2)$, and similarly on homology.  In this case, $f_*$ is identically zero (respectively surjective) if and only if $\mathbf{f}_*$ is identically zero (respectively surjective).  We point out that this observation is not true if $\mfs_1$ and $\mfs_2$ are non-torsion. For example, for $n \neq 0$, we have that $HF^-(S^2 \times S^1, \mfs_n) \cong \F[U]/(U^n-1)$, where $c_1(\mfs_n)$ is $2n$ times a generator of $H^2(S^2 \times S^1;\Z)$, while $\HFb^-(S^2 \times S^1, \mfs_n)=0$; see \cite[Section 2]{MOlink}.

\subsection{The mapping cone formula and cobordism maps}
We now recall the mapping cone formula of \cite{OSinteger}, which allows us to compute the Heegaard Floer homology of Dehn surgeries, as well as the cobordism maps for 2-handle attachments.  We will also describe the mapping cone formula for $\HFb^-$ given in \cite{MOlink}.  We state these results for the case of $n$-surgery on a knot in $S^3$ for an integer $n$.  For rational surgeries, see \cite{OSrational}.  

Let $\CFKi(K)$ denote the knot Floer complex of $K \subset S^3$. The complex $\CFKi(K)$ is freely generated over $\F[U, U^{-1}]$ and has the structure of a $\Z \oplus \Z$-filtered $\F[U]$-module. For $X$ a subset of $\Z \oplus \Z$, let $CX$ denote the subgroup of $\CFKi(K)$ generated by elements with filtration level $(i, j) \in X$. The group $CX$ will inherit the structure of a chain complex if it is a subquotient complex of $\CFKi(K)$, in which case it also inherits the structure of an $\F[U]$-module.  We consider the following subcomplexes of $\CFKi(K)$:

\begin{align*}
	A_s^- &= C \{ i \leq 0 \textup{ and } j \leq s\} \\
	B_s^- &= C \{ i \leq 0\},
\end{align*}
and the quotient complexes
\begin{align*}
	A_s^+ &= \CFKi(K)/A_s^- = C \{ i > 0 \textup{ or } j > s\} \\
	B_s^+ & = \CFKi(K)/B^- = C \{ i > 0\}.
\end{align*}

The complexes $A^\pm_s$ are quasi-isomorphic to $CF^\pm(S^3_N(K), \mfs_{N,s})$ for $N \gg 0$ and some Spin$^c$ structure $\mfs_{N,s}$.  In particular, $H(A^-_s)$ (respectively $H(A^+_s)$) is non-canonically isomorphic to the sum of $\F[U]$ (respectively $\cT^+=\F[U, U^{-1}]/\F[U]$) together with a finite-dimensional, $U$-torsion module.    

\begin{remark}
In \cite{OSinteger}, $A^+_s$ is defined to be $C \{ i \geq 0 \textup{ or } j \geq s\}$ and $B^+_s$ to be $C \{ i \geq 0\}$. Up to an overall grading shift, which will not matter for our purposes, these two definitions agree. In particular, the definitions of $V^+_s$ and $H^+_s$ below agree with their definitions in \cite{NiWu}.
\end{remark}

The complex $B_s^-$ is naturally identified with $CF^-(S^3)$ and $B_s^+$ with $CF^+(S^3)$. Furthermore, there is a chain homotopy equivalence from $C\{ j \leq 0\}$ to $B_s^-$ and from $C\{j >0\}$ to $B_s^+$.
We have natural chain maps
\begin{align*}
	v_s^- \co A_s^- &\rightarrow B_s^- \\
	v_s^+ \co A_s^+ &\rightarrow B_s^+
\end{align*}
consisting of inclusion and projection respectively. We also have maps
\begin{align*}
	h^-_s \co A_s^- &\rightarrow B_{s+n}^- \\
	h^+_s \co A_s^+ &\rightarrow B_{s+n}^+.
\end{align*}
The map $h^-_s$ consists of inclusion into $C\{j \leq s\}$, followed by the identification of $C\{j \leq s\}$ with $C\{ j \leq 0\}$ (via multiplication by $U^s$), followed by the chain homotopy equivalence from $C\{ j \leq 0\}$ to $C\{ i \leq 0\}$. The map $h^+_s$ consists of quotienting by $C\{ i>0 \textup{ and } j\leq s\}$, followed by the identification of $C\{ j>s\}$ with $C\{j>0\}$ (via multiplication by $U^s$), followed by the chain homotopy equivalence from $C\{ j > 0\}$ to $C\{ i > 0\}$. 

We will also work with the completions with respect to $U$ for the minus theory, yielding complexes $\Ab^-_s$ and $\Bb^-_s$, as well as maps $\bfv^-_s$ and $\bfh^-_s$.   

Define
\[ \bfD^-_n \co \prod_{s \in \Z} \Ab^-_s \rightarrow \prod_{s \in \Z} \Bb^-_s \]
where for $a_s \in \Ab^-_s$,
\[ \bfD^-_n(a_s)=\bfv^-_s (a_s) + \bfh^-_s(a_s). \]
Note that the cone of $\bfD^-$ is a $\Z/2\Z$-graded chain complex.

\begin{theorem}[{\cite[Theorem 1.1 and Theorem 11.3]{MOlink}}]\label{thm:mapping-cone}
The homology of the mapping cone of  $\bfD^-_n$ is isomorphic to $\HFb^-(S^3_n(K))$. Moreover, under this identification, the natural map 
\[
H_*(\Bb^-_s) \to H_*(Cone(\bfD^-_n))
\]
is identified with the map
\[ \HFb^-(S^3) \rightarrow \HFb^-(S^3_n(K)) \]
induced by the natural two-handle cobordism $W_n(K)$ endowed with the $s^\textup{th}$ Spin$^c$ structure (for the identification of the Spin$^c$ structures with the integers as in \cite{MOlink}).
\end{theorem}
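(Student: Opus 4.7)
The plan is to follow the original proof strategy of Ozsv\'ath--Szab\'o for the integer surgery mapping cone (in the $+$/$\widehat{\phantom{F}}$ setting), adapted to the completed minus theory as in Manolescu--Ozsv\'ath--Szab\'o. The two essential ingredients are the large surgery formula and the surgery exact triangle, assembled via a truncation/limit argument. Everything must be carried out with $\F[[U]]$ coefficients to ensure the infinite products appearing in the statement behave well.

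First I would establish the large surgery formula in the completed setting: for $N$ sufficiently large compared to $s$, there is a chain homotopy equivalence $\Ab^-_s \simeq \CFb^-(S^3_N(K), \mfs_{N,s})$, induced by counting holomorphic triangles in the standard Heegaard triple diagram for $N$-surgery. The classical argument shows that for large enough $N$, the only pseudo-holomorphic triangles contributing to the cobordism map are supported in a strip of the knot Floer complex determined by the Spin$^c$ structure, which exactly computes $A^-_s$; tensoring with $\F[[U]]$ gives the bar version. Simultaneously, $\Bb^-_s$ must be identified with $\CFb^-(S^3)$, and the maps $\bfv^-_s, \bfh^-_s$ must be shown to represent the two natural $2$-handle cobordism maps
\[ \CFb^-(S^3_N(K), \mfs_{N,s}) \to \CFb^-(S^3) \]
associated to blowing down the $-1$-framed meridian in two different ways (reflecting the two possible Spin$^c$ structures on the cobordism).

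Next, I would feed this into the surgery exact triangle relating $\HFb^-(S^3)$, $\HFb^-(S^3_N(K))$, and $\HFb^-(S^3_n(K))$ for $N \gg 0$, obtained from a Heegaard quadruple diagram realizing all three surgeries at once. After using the large surgery identification on the summands indexed by Spin$^c$ structures on $S^3_N(K)$, the connecting homomorphism in the triangle is given by $\bfv^-_s + \bfh^-_s$ on each summand, so the mapping cone of $\bfD^-_n$ computes the homology of the total complex of the triangle, which in turn computes $\HFb^-(S^3_n(K))$. The identification of the inclusion $H_*(\Bb^-_s) \to H_*(\mathrm{Cone}(\bfD^-_n))$ with the cobordism map of $W_n(K)$ in the $s^{\mathrm{th}}$ Spin$^c$ structure follows by naturality: this inclusion sits inside the triangle as precisely the component of the $2$-handle cobordism map in the correct Spin$^c$ structure, and a holomorphic triangle count on the quadruple diagram matches the two.

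The main obstacle, and the reason one needs completions at all, is the passage from finite truncations (over Spin$^c$ structures $|s| \leq M$) to the full infinite product. In the $+$ theory every element is $U$-torsion, so truncation is automatic and no completion is needed; in the $-$ theory, however, an element of $\prod_s \Ab^-_s$ need not be supported in finitely many $s$, so one must track chain homotopies between successive truncations and verify that they assemble into a well-defined quasi-isomorphism in the limit. This convergence statement is exactly what forces the base ring to be $\F[[U]]$ rather than $\F[U]$, and carrying it out rigorously—ensuring that the triangle maps, the chain homotopies producing the exact triangle, and the identifications $\Ab^-_s \simeq \CFb^-(S^3_N(K),\mfs_{N,s})$ are all compatible under the limit—is the delicate part of the argument.
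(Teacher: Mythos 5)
This statement is not proved in the paper at all: it is quoted verbatim from Manolescu--Ozsv\'ath \cite[Theorems 1.1 and 11.3]{MOlink} (building on \cite{OSinteger}), so there is no in-paper argument to compare against. Your outline --- large surgery identification $\Ab^-_s \simeq \CFb^-(S^3_N(K),\mfs_{N,s})$, identification of $\bfv^-_s,\bfh^-_s$ with the two Spin$^c$-decorated blow-down cobordism maps, the surgery exact triangle assembled by truncation, and completion over $\F[[U]]$ to control the infinite products in the minus theory --- is exactly the strategy of those cited proofs, so it is consistent with the source the paper relies on, though as written it is a sketch of that argument rather than a self-contained proof.
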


\begin{remark}
One can similarly define 
\[ D^+_n \co \oplus_{s \in \Z} A^+_s \rightarrow \oplus_{s \in \Z} B^+_s \]
where $D^+_n(a_s)=v^+_s (a_s) + h^+_s(a_s)$ for $a_s \in A^+_s$. The analogous result to Theorem \ref{thm:mapping-cone} holds for $HF^+$, e.g., the homology of the mapping cone of $D^+_n$ is isomorphic to $HF^+(S^3_n(K))$; see \cite[Theorem 1.1]{OSinteger}. However, for our purposes here, we will only need the mapping cone formula for $\HFb^-$.
\end{remark}

\begin{remark}
The reason we need completions and direct products in the minus theory is that for a given cobordism $W$, the element $F^-_{W,\mft}(x)$ can be non-zero for infinitely many $\mft$; for further details, see \cite[Section 8.1]{MOlink}. This situation never occurs in the plus theory by \cite[Theorem 3.3]{OSsmoothfour}.   
\end{remark}

Theorem~\ref{thm:mapping-cone} also provides an effective way to compute the $d$-invariants of surgery on $K$, which we now describe.    

The chain maps $v^\circ_s$ and $h^\circ_s$ induce maps
\begin{align*}
	v^\circ_{s,*} \co H(A_s^\circ) &\rightarrow H(B_s^\circ) \\
	h^\circ_{s,*} \co H(A_s^\circ) &\rightarrow H(B_{s+n}^\circ).
\end{align*}
Similarly, $\bfv_s^-$ and $\bfh_s^-$ induce maps
\begin{align*}
	\bfv^-_{s,*} \co H(\Ab_s^-) &\rightarrow H(\Bb_s^-) \\
	\bfh^-_{s,*} \co H(\Ab_s^-) &\rightarrow H(\Bb_{s+n}^-).
\end{align*}

We define integers $V^+_s$ and $H^+_s$ as follows. Let $\cT^+=\F[U, U^{-1}]/\F[U]$. Recall that $H(B_s^+) \cong \cT^+$. There is a distinguished submodule $\cT^+ \subset H(A^+_s)$ consisting of elements which are in the image of $U^N$ for arbitrarily large $N$. Define
\[ V^+_s = \rank (\ker v^+_{s,*}|_{\cT^+}) \]
\[ H^+_s = \rank (\ker h^+_{s,*}|_{\cT^+}). \]
Then $v^+_{s,*}|_{\cT^+}$ (respectively $h^+_{s,*}|_{\cT^+}$) is multiplication by $U^{V^+_s}$ (respectively $U^{H^+_s}$).

From \cite[Lemma 2.4 and Proposition 1.6]{NiWu} we have
\begin{equation}\label{eqn:dV}
	d(S^3_1(K)) = -2V^+_0
\end{equation}
\begin{equation}\label{eqn:Vmono}
	V^+_{s+1} \leq V^+_s
\end{equation}
\begin{equation}\label{eqn:VHsym}
	H^+_s=V^+_{-s}.
\end{equation}

We can similarly define
\[ V^-_s = \rank (\coker v^-_{s,*}) \]
\[ H^-_s = \rank (\coker h^-_{s,*}). \]
Recall that $H(B_s^-) \cong \F[U]$ and $H(A_s^-)/\ker(U^N) \cong \F[U]$ for $N \gg 0$.  Since $v^-_s$ and $h^-_s$ are zero on the $U$-torsion submodule of $H(A^-_s)$, we see that $v^-_{s,*}$ (respectively $h^-_{s,*}$) induces a self-map of $\F[U]$ which is given by multiplication by $U^{V^-_s}$ (respectively $U^{H^-_s}$).  

\begin{lemma}
For all $s$, we have $V^-_s=V^+_s$.
\end{lemma}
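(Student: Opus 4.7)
The plan is to introduce the auxiliary complex $Q_s := B^-_s/A^-_s \cong C\{i \le 0, j > s\}$ and compute $\dim_\F H(Q_s)$ in two different ways, extracting $V^-_s$ from one side and $V^+_s$ from the other. To that end, I would consider the two short exact sequences of chain complexes
\begin{equation*}
0 \to A^-_s \xrightarrow{v^-_s} B^-_s \to Q_s \to 0, \qquad 0 \to Q_s \to A^+_s \xrightarrow{v^+_s} B^+_s \to 0,
\end{equation*}
where the first is just the inclusion $v^-_s \co A^-_s \hookrightarrow B^-_s$, and the second comes from identifying $v^+_s \co \CFKi/A^-_s \twoheadrightarrow \CFKi/B^-_s$ as the obvious projection, whose kernel is $B^-_s/A^-_s = Q_s$.

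For the first sequence, the induced long exact sequence in homology gives $\dim_\F H(Q_s) = \dim_\F \coker v^-_{s,*} + \dim_\F \ker v^-_{s,*}$. By definition $\coker v^-_{s,*} \cong \F[U]/U^{V^-_s}$ has $\F$-dimension $V^-_s$, and $\ker v^-_{s,*}$ coincides with the $U$-torsion submodule $T_- \subset H(A^-_s)$ because $H(B^-_s) \cong \F[U]$ is torsion-free. Thus $\dim_\F H(Q_s) = V^-_s + \dim_\F T_-$. For the second sequence, the map $v^+_{s,*}$ is surjective: its restriction to $\cT^+ \subset H(A^+_s)$ is multiplication by $U^{V^+_s}$, which already surjects onto $H(B^+_s) = \cT^+$. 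Hence the connecting homomorphism vanishes and $H(Q_s) \cong \ker v^+_{s,*}$. Choosing any $\F$-vector space splitting $H(A^+_s) = \cT^+ \oplus T_+$, I may write $v^+_{s,*}(c,t) = U^{V^+_s} c + \psi(t)$ for some $\F$-linear $\psi \co T_+ \to \cT^+$; since multiplication by $U^{V^+_s}$ is surjective on $\cT^+$ with kernel of $\F$-dimension $V^+_s$, a rank-nullity count gives $\dim_\F \ker v^+_{s,*} = V^+_s + \dim_\F T_+$.

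Equating the two expressions produces $V^-_s + \dim T_- = V^+_s + \dim T_+$, so it remains to prove $\dim T_- = \dim T_+$. This I would derive from the long exact sequence of $0 \to A^-_s \to \CFKi \to A^+_s \to 0$: identifying $H(\CFKi) \cong \F[U, U^{-1}]$, the image of $H(\CFKi) \to H(A^+_s)$ is a nonzero divisible $\F[U]$-submodule isomorphic to $\cT^+$, and hence must coincide with the maximal divisible submodule $\cT^+ \subset H(A^+_s)$. The connecting homomorphism $H(A^+_s) \to H(A^-_s)[-1]$ then descends to an isomorphism $T_+ := H(A^+_s)/\cT^+ \cong T_-[-1]$, so $\dim T_+ = \dim T_-$ and therefore $V^-_s = V^+_s$. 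The main conceptual step in the argument is recognizing $Q_s$ as the natural bridge between the minus and plus pictures; once that is in hand, both dimension counts are routine, and the only mild subtlety is remembering that the splitting $H(A^+_s) = \cT^+ \oplus T_+$ need only hold at the level of $\F$-vector spaces for the rank-nullity count to go through.
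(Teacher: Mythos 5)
Your argument is correct, but it takes a genuinely different route from the paper's. The paper proves the lemma by a direct diagram argument (modeled on \cite[Proposition 2.13]{OSS}): it places the inclusion $A^-_s \hookrightarrow CFK^\infty(K)$ and the projection $CFK^\infty(K) \twoheadrightarrow A^+_s$ into a commutative diagram over the columns $H(A^-_s) \to HF^-(S^3)$, $HF^\infty(S^3) \to HF^\infty(S^3)$, $H(A^+_s) \to HF^+(S^3)$, and identifies the middle vertical map in two ways, as multiplication by $U^{V^+_s}$ from the right square and as multiplication by $U^{V^-_s}$ from the left square (using that all relevant maps kill $U$-torsion), forcing $V^-_s = V^+_s$. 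You instead run a dimension count on the finite-dimensional quotient $Q_s = B^-_s/A^-_s$: the sequence $0 \to A^-_s \to B^-_s \to Q_s \to 0$ gives $\dim_\F H(Q_s) = V^-_s + \dim_\F T_-$, the sequence $0 \to Q_s \to A^+_s \to B^+_s \to 0$ gives $\dim_\F H(Q_s) = V^+_s + \dim_\F T_+$, and the long exact sequence of $0 \to A^-_s \to CFK^\infty(K) \to A^+_s \to 0$ identifies $T_+$ with $T_-$ (the image of $H(CFK^\infty)$ is exactly the distinguished $\cT^+ \subset H(A^+_s)$, and the connecting map hits exactly the $U$-torsion of $H(A^-_s)$). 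Both proofs are ultimately comparisons through the $\infty$-flavor, but the paper's is shorter and avoids any bookkeeping of torsion dimensions, while yours is somewhat more self-contained: it only uses the structural facts that $H(A^-_s)$ is $\F[U]$ plus finite torsion, that $H(A^+_s)$ is $\cT^+$ plus a finite piece, and that $v^\pm_{s,*}$ act by $U^{V^\pm_s}$ on the free/divisible parts, at the cost of some care with infinite-dimensional modules (your rank--nullity step for $\ker v^+_{s,*}$ is fine because the projection of the kernel onto the finite complement is surjective with kernel $\ker(U^{V^+_s}|_{\cT^+})$). The only points you use silently are standard finiteness/nontriviality facts --- that $H(Q_s)$ is finite dimensional and that $H(A^-_s) \to H(CFK^\infty)$ is injective modulo torsion with image $U^k\F[U]$ --- both of which follow from the localization $U^{-1}A^-_s \simeq CFK^\infty(K)$; they do not affect correctness, but spelling them out would make the argument airtight.
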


\begin{proof}
The proof is essentially identical to that of \cite[Proposition 2.13]{OSS}. Namely, we consider the following commutative diagram
\[
\begin{CD}
	H(A^-_s) @>>> HF^\infty(S^3) @>>> H(A^+_s)  \\
	@Vv^-_{s,*} VV	@VVV	@Vv^+_{s,*}VV\\
	HF^-(S^3) @>>> HF^\infty(S^3) @>>> HF^+(S^3),
\end{CD}
\]
where the upper left arrow is induced by inclusion into $CFK^\infty(K)$ and the upper right arrow is induced by projection. Recall that $v^+_{s,*}$ restricted to the image of $HF^\infty(S^3)$ in $H(A^+_s)$ is multiplication by $U^{V^+_s}$.  Thus, the middle vertical arrow above is given by multiplication by $U^{V^+_s}$. Similarly, since both $v^-_{s,*}$ and the map from $H(A^-_s)$ to $HF^\infty(S^3)$ are zero on the $U$-torsion submodule of $H(A^-_s)$, the map given by the middle vertical arrow is multiplication by $U^{V^-_s}$. It follows that $V^-_s=V^+_s$.
\end{proof}

In light of the above lemma, we will omit the superscript and simply write $V_s$ rather than $V^\pm_s$.

\subsection{A vanishing theorem for 2-handle maps}
Using Theorem~\ref{thm:mapping-cone}, we are now able to establish the main technical result we will need to prove Theorems~\ref{thm:main} and \ref{thm:stein}.  

\begin{proposition}\label{prop:vanishing}
Suppose that $d(S^3_1(K)) = 0$. Then for $n$ a positive integer, the induced map 
	\[ F^-_{W_n(K), \mft} \co HF^-(S^3) \to HF^-(S^3_n(K)) \]
is identically zero for all Spin$^c$ structures $\mft$ on $W_n(K)$.
\end{proposition}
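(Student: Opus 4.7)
The plan is to use the mapping cone description of Theorem~\ref{thm:mapping-cone} to identify the completed cobordism map $\Fb^-_{W_n(K),\mft}$ with the natural inclusion $H_*(\Bb^-_s)\hookrightarrow H_*(Cone(\bfD^-_n))$, and then prove its vanishing by exhibiting an explicit preimage of the generator under $\bfD^-_{n,*}$. A preliminary reduction, using the discussion in Section~\ref{sec:completions}, lets me replace $F^-_{W_n(K),\mft}$ by $\Fb^-_{W_n(K),\mft}$ throughout.

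The hypothesis $d(S^3_1(K)) = 0$ unpacks, via \eqref{eqn:dV}, \eqref{eqn:Vmono}, \eqref{eqn:VHsym}, and the non-negativity of the $V$-invariants, into $V_s = 0$ for all $s\geq 0$ and $H_s = 0$ for all $s\leq 0$. In the completed setting, this says that $\bfv^-_{s,*}$ hits the generator $\mathbf{1}_s$ of the free $\F[[U]]$-summand of $H_*(\Bb^-_s)$ whenever $s\geq 0$, and $\bfh^-_{s,*}$ hits the corresponding generator of $H_*(\Bb^-_{s+n})$ whenever $s\leq 0$. Fix now a Spin$^c$ structure $\mft$ on $W_n(K)$ labeled by some $s\in\Z$. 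By Theorem~\ref{thm:mapping-cone} and the long exact sequence of the mapping cone, the kernel of $\Fb^-_{W_n(K),\mft}$ coincides with the image of $\bfD^-_{n,*}\co H_*(\prod_t\Ab^-_t)\to H_*(\prod_t\Bb^-_t)$, so by $\F[[U]]$-linearity it suffices to show that $(0,\ldots,\mathbf{1}_s,\ldots,0)$ lies in this image.

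To build a preimage when $s\geq 0$, I would start by choosing $\gamma_s$ with $\bfv^-_{s,*}(\gamma_s) = \mathbf{1}_s$ and then inductively, for each $k \geq 1$, pick $\gamma_{s+kn}$ satisfying $\bfv^-_{s+kn,*}(\gamma_{s+kn}) = \bfh^-_{s+(k-1)n,*}(\gamma_{s+(k-1)n})$, setting all other coordinates of $\gamma$ to zero; each step is solvable by surjectivity of $\bfv^-_{s+kn,*}$ (since $s+kn\geq 0$), and by construction $\bfD^-_{n,*}(\gamma) = (0,\ldots,\mathbf{1}_s,\ldots,0)$. For $s < 0$, the symmetric construction propagates downward through $s-n, s-2n, \ldots$, using surjectivity of $\bfh^-_{s-kn,*}$ (valid since $s-kn<0$ for every $k \geq 0$), and handles the remaining Spin$^c$ structures. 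Note that because we are working with $\prod_s$ rather than $\oplus_s$, the infinite sequence of inductive choices genuinely defines an element of the target.

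The main subtlety I anticipate is justifying the surjectivity inputs carefully: the integer $V_s$ controls the action of $\bfv^-_{s,*}$ only on the $U$-nontorsion summand of $H_*(\Ab^-_s)$, so I would want to verify via the structural decomposition of $H_*(\Ab^-_s)$ as $\F[[U]]$ plus a $U$-torsion summand (together with $V_s^- = V_s^+$ from the preceding lemma) that $V_s = 0$ genuinely makes $\bfv^-_{s,*}$ hit the generator $\mathbf{1}_s$, and analogously for $\bfh^-_{s,*}$. Once those inputs are in place, the construction above is purely formal and yields the desired vanishing for every Spin$^c$ structure $\mft$.
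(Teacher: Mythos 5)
Your proof is correct, and it reaches the conclusion by a technically different route than the paper. Both arguments share the same engine: pass to completed coefficients, use Theorem~\ref{thm:mapping-cone} to identify the cobordism map with the inclusion of $H_*(\Bb^-_s)$ into the homology of the cone, deduce from $d(S^3_1(K))=0$ via \eqref{eqn:dV}, \eqref{eqn:Vmono}, \eqref{eqn:VHsym} that $\bfv^-_{i,*}$ is surjective for $i\geq 0$ and $\bfh^-_{i,*}$ is surjective for $i\leq 0$, and then inductively build a preimage of the generator of $H_*(\Bb^-_s)$ under $\bfD^-_{n,*}$. The difference is that the paper first truncates: it replaces $Cone(\bfD^-_1)$ by the finite quotient complex $\bbX$ (quasi-isomorphic by \cite[Lemma 8.8]{MOlink}), so the inductive chain of choices terminates at $i=g-1$ (resp.\ $i=2-g$), where $\hb_{g-1,*}$ (resp.\ $\vb_{1-g,*}$) vanishes in the quotient, and the Spin$^c$ structures with $|s|\geq g$ are handled separately and trivially. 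You instead work with the full cone and allow the chain of choices to run forever, which is legitimate precisely because the minus theory uses $\prod_s$ rather than $\oplus_s$: homology commutes with direct products, each component of $\bfD^-_{n,*}$ of your element involves only finitely many coordinates, and so your infinitely supported sequence is an honest class mapping to $(0,\dots,\mathbf{1}_s,\dots,0)$; this also treats all $s$ uniformly. What each buys: the paper's truncation keeps everything finite and avoids any discussion of homology of infinite products, at the cost of invoking the truncation lemma; your version avoids that citation but must (and does) justify the product-level bookkeeping. The one point you flag as a subtlety---that $V_s=0$ (together with $V^-_s=V^+_s$ and vanishing of $v^-_{s,*}$, $h^-_{s,*}$ on $U$-torsion) really gives surjectivity of $\vb^-_{s,*}$ and $\bfh^-_{s,*}$ onto $\F[[U]]$---is exactly the justification the paper supplies, so filling it in as you indicate completes the argument.
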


\begin{remark}
Rasmussen uses similar arguments to study the cobordism map in the case of $0$-framed $2$-handles in \cite{Rasmussen}.
\end{remark}

\begin{proof}
For ease of exposition, we consider the case $n=1$; a similar argument applies for $n>1$.

As discussed in Section~\ref{sec:completions}, it suffices to show that $\Fb^-_{W_1(K),\mft}$ is zero for all $\mft$ (i.e., to establish the result for completed coefficients).  We now fix $\mft$ and let $s$ be such that $\langle c_1(\mft), [\widehat{\Sigma}] \rangle = 2s + 1$, where $\widehat{\Sigma}$ is a capped-off Seifert surface for $K$ in $W_1(K)$.  Throughout this proof, we only work with the minus theory, so to simplify notation, we write $\Ab_s$ rather than $\Ab^-_s$, etc.  

Consider the quotient complex $\bbX$ of $Cone(\bfD_1)$ defined by $\bbX = \bbA \oplus \bbB$, where 
\[ \bbA = \oplus^{g-1}_{i = 1-g} \Ab_i, \qquad \bbB = \oplus^{g-1}_{i = 2-g} \Bb_i.\]
Here, the differential on $\bbX$ is induced by $\bfD_1$ and $g$ denotes the Seifert genus of $K$.  See Figure \ref{fig:truncatedcone}. By \cite[Lemma 8.8]{MOlink} (see also \cite[Section 4.1]{OSinteger}), we see that the projection from $Cone(\bfD_1)$ to $\bbX$ is a quasi-isomorphism.  Therefore, the cobordism map 
	\[ \Fb^-_{W_1(K), \mft} \co \HFb^-(S^3) \to \HFb^-(S^3_1(K)) \]
will be zero if the the inclusion of $\Bb_{s}$ into $\bbX$ is trivial on homology; indeed, the inclusion of $\Bb_{s}$ into $Cone(\bfD_1)$ would induce the zero map on homology, and the result will follow from Theorem~\ref{thm:mapping-cone}.  Consider the short exact sequence
	\[ 0 \rightarrow \bbB \rightarrow \bbX \rightarrow \bbA \rightarrow 0 \]
and the associated long exact sequence
	\[ \cdots \rightarrow H(\bbA)  \overset{\d}\rightarrow H(\bbB) \overset{\iota}\rightarrow H(\bbX) \rightarrow \cdots \]
The boundary operator $\d$ in the long exact sequence is induced by $\bfD_1$.  If we can show that the boundary operator is surjective, this will imply that the map $\iota \co H(\bbB) \rightarrow H(\bbX)$ is zero. Since the map from $H(\Bb_{s})$ to $H(\bbX)$ factors through $H(\bbB)$, it will follow that $\Fb_{W_1(K), \mft}$ is zero. Note that for $s \leq 1 - g$ or $s \geq g$, the triviality of the cobordism map follows immediately from the fact that the map from $\Bb_s$ into $\bbB$ is identically 0 on the chain level.  We now consider the case $2 - g \leq s \leq g -1$.  

\begin{figure}[ht!]
\begin{tikzpicture}[scale=1]
	\draw (-6,0) node [] (A1-g) {$\Ab_{1-g}$};
	\draw (-4,0) node [] (A2-g) {$\Ab_{2-g}$};
	\draw (-2,0) node [] (A-1) {$\Ab_{-1}$};
	\draw (0,0) node [] (A0) {$\Ab_0$};
	\draw (2,0) node [] (A1) {$\Ab_1$};
	\draw (4,0) node [] (Ag-2) {$\Ab_{g-2}$};
	\draw (6,0) node [] (Ag-1) {$\Ab_{g-1}$};
	
	\draw (-4,-2) node [] (B2-g) {$\Bb_{2-g}$};
	\draw (0,-2) node [] (B0) {$\Bb_0$};
	\draw (2,-2) node [] (B1) {$\Bb_1$};
	\draw (6,-2) node [] (Bg-1) {$\Bb_{g-1}$};

	\draw (-2.75,-1) node [] {$\dots$};	
	\draw (3.25,-1) node [] {$\dots$};
	
	\draw [->] (A1-g) -- (B2-g) node [midway, left, xshift=1pt] {\tiny${\hb_{1-g}}$};
	\draw [->] (A2-g) -- (B2-g) node [midway, left, xshift=2pt] {\tiny${\vb_{2-g}}$};
	\draw [->] (A-1) -- (B0) node [midway, left] {\tiny${\hb_{-1}}$};
	\draw [->] (A0) -- (B0) node [midway, left] {\tiny${\vb_0}$};
	\draw [->] (A0) -- (B1) node [midway, left] {\tiny${\hb_0}$};
	\draw [->] (A1) -- (B1) node [midway, left] {\tiny${\vb_1}$};
	\draw [->] (Ag-2) -- (Bg-1) node [midway, left, xshift=1pt] {\tiny${\hb_{g-2}}$};
	\draw [->] (Ag-1) -- (Bg-1) node [midway, left, xshift=2pt] {\tiny${\vb_{g-1}}$};

\end{tikzpicture}
\caption{}
\label{fig:truncatedcone}
\end{figure}

By hypothesis $d(S^3_1(K))=0$, so by \eqref{eqn:dV} and \eqref{eqn:Vmono}, $V_i=0$ for $i \geq 0$. In particular, for $i \geq 0$, the map $v_{i,*}$ is surjective, and thus $\vb_{i,*}$ is surjective as well.  Similarly, by \eqref{eqn:VHsym}, we have that $H_i = 0$ for all $i \leq 0$, and thus $\hb_{i,*}$ is surjective for all such $i$.  
Consider the induced map on homology $\bfD_{1,*} \co H(\bbA) \rightarrow H(\bbB)$ given by $a_i \mapsto \vb_{i,*}(a_i)+\hb_{i,*}(a_i)$ for $a_i\in H(\Ab_i)$ where $\vb_{i,*} \co H(\Ab_i) \rightarrow H(\Bb_i)$ and $\hb_{i,*} \co H(\Ab_i) \rightarrow H(\Bb_{i+1})$.  We claim that $H(\Bb_s)$ is in the image of $\bfD_{1,*}$ for $2 - g \leq s \leq g - 1$.  We begin with the case that $0 \leq s \leq g - 1$. Given $b_s \in H(\Bb_s)$, we define an element of $H(\bbA)$ as follows.  Since $\vb_{s,*}$ is surjective, there exists $a_s \in H(\Ab_s)$ such that $\vb_{s,*}(a_s) = b_s$.  Next, for $i \geq s$, define inductively $a_{i+1} \in H(\Ab_{i+1})$ by $\vb_{i+1,*}(a_{i+1}) = \hb_{i,*}(a_{i})$.  Such an $a_{i+1}$ exists by the surjectivity of $\vb_{i+1,*}$.  It is now straightforward to verify that
\begin{align*}
	\bfD_{1,*} \Big(\sum_{i=s}^{g-1} a_i \Big) &= \vb_{g-1,*}(a_i) + \sum_{i=s}^{g-2} \left(\vb_{i,*}(a_i)+\hb_{i,*}(a_i)\right) \\
						&= \vb_{s,*}(a_s) + \hb_{s,*}(a_s) + \vb_{g-1,*}(a_i) + \sum_{i=s+1}^{g-2} \left( \vb_{i,*}(a_{i})+ \hb_{i,*}(a_{i}) \right)\\
						&= \vb_{s,*}(a_s) + \vb_{s+1,*}(a_{s+1}) + \vb_{g-1,*}(a_i) + \sum_{i=s+1}^{g-2} \left(\vb_{i,*}(a_{i}) + \vb_{i+1,*}(a_{i+1})  \right)\\
						&= \vb_{s,*}(a_s) \\
						&= b_s,
\end{align*}
since $\hb_{g-1,*} = \vb_{1 - g,*} = 0$ in the quotient complex $\bbX$, as the targets of these maps are not in $\bbX$. This proves the claim for $0 \leq s \leq g - 1$. A similar argument, using that $\hb_{s,*}$ is surjective for $s \leq 0$, shows that $H(\Bb_s)$ is also in the image of $\bfD_{1,*}$ for $2-g \leq s <0$. This implies that the map $\bfD_{1,*} \co H(\bbA) \rightarrow H(\bbB)$ is surjective, which is what we needed to show.   

The argument is nearly identical for $n>1$. Namely, given $b_s \in H(\Bb_s)$, we use the fact that $v_{s,*}$ (respectively $h_{s,*}$) is surjective for $s \geq 0$ (respectively $s \leq 0$) to find elements $a_i$, $i \equiv s \pmod n$, such that $\bfD_{n,*} (\sum a_i)=b_s$.
\end{proof}

\section{Proofs of the main theorems}\label{sec:proofs}

Having established our main technical result in Proposition~\ref{prop:vanishing}, Theorems~\ref{thm:main} and ~\ref{thm:stein} will follow fairly quickly.  Throughout this section, all handlebody decompositions are assumed to have a single 0-handle and if the manifold is closed, a single 4-handle.  For the proof of Theorem~\ref{thm:main}, we will first need a short lemma about handlebody decompositions of definite four-manifolds.  The following is standard, but we include it for completeness.

\begin{lemma}\label{lem:handle-splitting}
Consider a handlebody decomposition of a closed, positive definite four-manifold $X$ with no 1-handles, $n$ 2-handles, and $k$ 3-handles.  Then, there exists a handlebody decomposition of $X$ with the same numbers of 1-, 2-, and 3-handles, but such that the linking matrix for the 2-handles is of the form $\begin{pmatrix} I_{n-k} & 0 \\ 0 & 0_k \end{pmatrix}$, where $I_{n-k}$ denotes the $(n-k) \times (n-k)$ identity matrix and $0_k$ denotes the $k \times k$ zero matrix.
\end{lemma}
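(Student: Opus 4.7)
The strategy is linear algebra on the chain complex of the handle decomposition, combined with Donaldson's theorem. Let $Z$ denote the 2-handlebody, so that $H_2(Z) \cong \Z^n$ carries the intersection form given by the linking matrix $L$, and let $M := \partial_3 \co C_3 = \Z^k \to C_2 = \Z^n$ be the cellular boundary map. Since $X$ has no 1-handles, it is simply-connected, so $H_2(X)$ is a free abelian group and by Poincar\'e duality $H_3(X) \cong H^1(X) = 0$; thus $M$ is injective and $\coker(M) = H_2(X) \cong \Z^{n-k}$.

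Next I would show that $\Im(M) = \operatorname{rad}(L)$. The inclusion $\Im(M) \subseteq \operatorname{rad}(L)$ follows from the long exact sequence of the pair $(Z, \partial Z)$ combined with Poincar\'e--Lefschetz duality, which identifies $\operatorname{rad}(L)$ with the image of $H_2(\partial Z) \to H_2(Z)$, together with the observation that 3-handle attaching spheres lie in $\partial Z$. Equality then follows from the positive-definiteness assumption: the induced form on $\coker(M) = H_2(X)$ has rank $n-k$, forcing $\operatorname{rad}(L)$ to have rank exactly $k$, and torsion-freeness of $\coker(M)$ rules out $\Im(M)$ being a proper sublattice of $\operatorname{rad}(L)$.

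Now I perform the simplification, using the fact that handle slides realize arbitrary basis changes. Slides of 2-handles (with reorderings and orientation reversals) produce basis changes $P \in GL_n(\Z)$, sending $L \mapsto P^T L P$ and $M \mapsto P^{-1} M$; slides of 3-handles produce basis changes $Q \in GL_k(\Z)$, sending $M \mapsto M Q$. Using Smith normal form on $M$, I pick $P, Q$ so that $M$ becomes $\begin{pmatrix} 0 \\ I_k \end{pmatrix}$; then $\Im(M) = \operatorname{rad}(L)$ forces the new $L$ to have block form $\begin{pmatrix} L_{11} & 0 \\ 0 & 0 \end{pmatrix}$ with $L_{11}$ positive-definite of rank $n-k$. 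Since $L_{11}$ represents the intersection form of $X$, Donaldson's diagonalizability theorem provides $A \in GL_{n-k}(\Z)$ with $A^T L_{11} A = I_{n-k}$; the additional basis change $\begin{pmatrix} A & 0 \\ 0 & I_k \end{pmatrix}$ preserves the shape of $M$ and brings $L$ to the desired $\begin{pmatrix} I_{n-k} & 0 \\ 0 & 0_k \end{pmatrix}$.

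The main subtlety is the algebraic identity $\Im(M) = \operatorname{rad}(L)$, which uses both the topology of 4-manifolds with boundary (Poincar\'e--Lefschetz duality) and the positive-definiteness hypothesis on $X$. Once that is in place, the argument reduces to elementary divisors plus Donaldson's theorem, with the identification of basis changes with handle slides being standard Kirby calculus.
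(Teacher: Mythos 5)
Your proof is correct, but it reaches the key intermediate statement---that after handle slides the linking matrix becomes $A \oplus 0_k$ with $A$ unimodular, positive definite, and equal to the intersection form of $X$---by a genuinely different route than the paper. The paper argues geometrically: the boundary of the 2-handlebody is $\#_k S^2 \times S^1$, so the linking matrix, being a presentation matrix for $H_1(\#_k S^2\times S^1)\cong \Z^k$, has nullity $k$ and is congruent to $A \oplus 0_k$ with $A$ unimodular; it then splits $X = W \cup_Y Z$ along the integer homology sphere $Y = \partial W$ to identify $A$ with the intersection form of $X$, applies Donaldson, and finally checks that the diagonalizing slides among the first $n-k$ handles do not disturb the zero-framed, zero-linking components. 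You instead never mention the boundary three-manifold or the splitting: you work with the handle chain complex, showing $\partial_3$ is injective with free cokernel $H_2(X)$, that $\Im(\partial_3)$ lies in the radical of the linking form (since 3-handle attaching spheres live in the boundary of the 2-handlebody), and that an adapted basis realized by 2-handle slides puts $L$ in the form $L_{11} \oplus 0_k$ with $L_{11}$ representing the intersection form of $X$, after which Donaldson finishes. Your congruence bookkeeping automatically absorbs the paper's final remark about not disturbing the other components, which is a small gain in cleanliness. Two minor points: injectivity of $\partial_3$ uses also that $\partial_4 = 0$ (immediate, since $C_4 \cong \Z \cong H_4(X)$); and the full equality $\Im(\partial_3) = \operatorname{rad}(L)$, hence the Smith normal form and the 3-handle slides, is not actually needed---since the lemma constrains only the linking matrix, the containment $\Im(\partial_3) \subseteq \operatorname{rad}(L)$ together with extending a basis of the primitive sublattice $\Im(\partial_3)$ to a basis of $C_2$ already gives the block form.
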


\begin{proof}
Consider a handlebody decomposition as in the hypothesis of the lemma.  By construction, the framed link $L$ presenting the 2-handle attachments provides a surgery description for $\#_k S^2 \times S^1$.   Thus, the linking matrix, $Q_L$, for $L$ has nullity $k$.  It is a standard fact in lattice theory that this implies that $Q_L$ is equivalent to a lattice of the form 
\begin{equation}\label{eq:linking-splitting}
Q' = \begin{pmatrix}
A & 0 \\
0 & 0_k
\end{pmatrix},
\end{equation}
where $A$ is a symmetric $(n-k) \times (n-k)$ integral matrix which is invertible over $\Q$.  Since $Q'$ provides a presentation matrix for $H_1(\#_k S^2 \times S^1;\Z) = \mathbb{Z}^k$, we have that $A$ must be unimodular.  Thus, we may handleslide $L$ to obtain a new framed link $L'$ whose linking matrix $Q_{L'}$ is given by $Q'$.  

Let $M \subset L'$ denote the $(n-k)$-component framed sublink whose linking matrix is $A$ (i.e., the first $n-k$ components of $L'$).  Let $Y$ denote the integer homology sphere obtained by framed surgery on $M$.  Consider the decomposition $X = W \cup_Y Z$, where $W$ consists of the 0-handle and the 2-handles attached along the components of $M$ and $Z$ consists of the 2-handles in $L' - M$ (i.e., the ones which have framing 0 and linking number 0 with all other components), together with the 3- and 4-handles.  Note that $b_2(W) = n - k = b_2(X)$.  Since $Y$ is an integer homology sphere, the intersection form of $X$ splits as a sum of the intersection forms for $W$ and $Z$.  It follows that the intersection form of $Z$ is trivial and thus the intersection form of $W$ agrees with that of $X$.  

Since $X$ is positive-definite, Donaldson's theorem implies that the intersection form of $W$ is diagonalizable.  Therefore, there exists a sequence of handleslides of $M$ which results in a framed link whose linking matrix is given by $I_{n-k}$.  Note that since the linking number of any component of $L' - M$ with each component of $M$ is zero, we can perform these handleslides in the entire framed link $L'$ without affecting any framings of or linking numbers involving the components of $L' - M$.  In other words, there exists a sequence of handleslides of $L'$ such that the resulting linking matrix is of the form $\begin{pmatrix} I_{n-k} & 0 \\ 0 & 0_k \end{pmatrix}$.
\end{proof}

\begin{proof}[Proof of Theorem~\ref{thm:main}]
Our goal is to identify a copy of $W_{1}(K)$ in $X$ for some $K$ with $d(S^3_{1}(K)) = 0$ so we can apply Proposition~\ref{prop:vanishing}.  After possibly turning $X$ upside-down, we can find a handlebody decomposition of $X$ with no 1-handles.  Now, we may apply Lemma~\ref{lem:handle-splitting} to find a submanifold of the form $W_{1}(K)$ smoothly embedded in $X$ for some $K$ (i.e., using the handlebody decomposition coming from the lemma, we can attach to the 0-handle one of the $+1$-framed 2-handles).  

Then, we may decompose $X = W_{1}(K) \cup_{S^3_{1}(K)} P$.  Note that $S^3_{1}(K)$ is an integer homology sphere and $b_2^+(W_{1}(K)) = 1$, which implies that $b_2^+(P) = b_2^+(X) - 1 \geq 1$.  In particular, $S^3_{1}(K)$ gives an admissible cut.  Since $S^3_{1}(K)$ is the boundary of $W_{1}(K)$, which is positive definite, we see that $d(S^3_{1}(K)) \leq 0$ \cite[Corollary 9.8]{OSgraded}.  Since $-S^3_{1}(K)$ is the boundary of $P$, which is also positive definite, we have that $d(-S^3_{1}(K)) \leq 0$.  This implies that $d(S^3_{1}(K)) = 0$, since $d$-invariants reverse sign under orientation reversal \cite{OSgraded}.  

Let $\mft$ be a Spin$^c$ structure on $X$, and let $\mft_1$ and $\mft_2$ denote the  restrictions to $W_{1}(K) - B^4$ and $P - B^4$ respectively. Proposition~\ref{prop:vanishing} implies that $F^-_{W_{1}(K) - B^4, \mft_1}=0$. Since the mixed map is equal to $F^+_{P - B^4, \mft_2} \circ F^-_{W_{1}(K) - B^4, \mft_1}$, we have that $\Phi_{X,\mft} = 0$.  The result now follows from Theorem~\ref{thm:symplectic-nonvanishing}.  
\end{proof}

\begin{proof}[Proof of Theorem~\ref{thm:stein}]
Assume that the trace $W_{n}(K)$ symplectically fills $(S^3_n(K),\xi)$, for some contact structure $\xi$ on $S^3_n(K)$. We now apply a standard argument used in \cite{KMpropertyP}, \cite{KMOS}, \cite{OSgenus}.  By \cite{EtnyreHonda} (see also \cite{Eliashberg}, \cite{Etnyre}), we can embed $W_{n}(K)$ in a symplectic manifold $X$ with $b_2^+(X) \geq 2$.  By Theorem~\ref{thm:symplectic-nonvanishing}, we have that $\Phi_{X,\mft} \neq 0$ for some $\mft$.  By construction, we have an admissible cut of $X$ along $S^3_{n}(K)$ with one piece being $W_{n}(K)$.  By assumption $d(S^3_{1}(K)) = 0$, so Proposition~\ref{prop:vanishing} implies that $\Phi_{X,\mft} = 0$, which is a contradiction.  
\end{proof}

We conclude the paper by providing a more elementary proof of Theorem~\ref{thm:stein} in the case that $K$ is smoothly slice.  If $K$ is slice, then by capping off the slice disk in the four-ball with the core of the $n$-framed 2-handle, we obtain a smoothly embedded sphere in $W_n(K)$ with self-intersection $n > 0$.  If the trace provided a symplectic filling of the boundary, we could embed it in a closed symplectic manifold with $b_2^+ \geq 2$ as in the proof of Theorem~\ref{thm:stein}.  However, a closed symplectic four-manifold with $b_2^+ \geq 2$ cannot contain a sphere of positive self-intersection by \cite[Proposition 10.1]{MST}, which is a contradiction.  For a self-contained way to see this last claim, take a regular neighborhood of the sphere, which is a disk bundle over $S^2$ with Euler number $n$, and thus has boundary $L(n,1)$.  Since $n > 0$, this disk bundle has $b_2^+ = 1$, so the lens space gives an admissible cut.  However, $HF_\red(L(n,1)) = 0$, and we see that the Ozsv\'ath-Szab\'o invariants for the four-manifold vanish, contradicting Theorem~\ref{thm:symplectic-nonvanishing}.

\bibliographystyle{amsalpha}
\bibliography{references}

\end{document}